\newcommand{\z}{\mathbb}
\newcommand{\N}{\mathbb{N}}
\newcommand{\Q}{\mathbb{Q}}
\newcommand{\Z}{\mathbb{Z}}
\newcommand{\al}{\alpha}
\newtheoremstyle{sltheorems}
{10pt}
{6pt}
{\slshape}
{}
{\bfseries}
{.}
{.5em}
{\thmname{#1}\thmnumber{ #2}\thmnote{ (#3)}}
\theoremstyle{sltheorems} 
\newtheorem{Theorem}{Theorem}[section]
\newtheorem{Lemma}[Theorem]{Lemma} 
\newtheorem{Corollary}[Theorem]{Corollary} 
\newtheorem{Definition}[Theorem]{Definition} 
\newtheorem{Proposition}[Theorem]{Proposition}
\newcolumntype{C}[1]{>{\centering\let\newline\\\arraybackslash\hspace{0pt}}m{#1}}
\author{Leonardo Carofiglio, Luigi De Filpo, Alessandro Gambini}
\title{$p$-adic valuation of harmonic sums and their connections with Wolstenholme primes}
\begin{document}

\begin{abstract}
We explore a conjecture posed by Eswarathasan and Levine on the distribution of $p$-adic valuations of harmonic numbers $H(n)=1+1/2+\cdots+1/n$ that states that the set $J_p$ of  the positive integers $n$ such that $p$ divides the numerator of $H(n)$ is finite. We proved two results, using a modular-arithmetic approach, one for non-Wolstenholme primes and the other for Wolstenholme primes, on an anomalous asymptotic behaviour of the $p$-adic valuation of $H(p^mn)$ when  the $p$-adic valuation of $H(n)$ equals exactly 3.
\end{abstract}

\maketitle

\small
\keywords{\emph{Keywords}: harmonic numbers; harmonic sums; Wolstenholme primes}

\normalsize
\section{Introduction and general setting}

\maketitle

\noindent The $n$-th harmonic number is defined as the partial sum of the well-known harmonic series as follows: 
\[
H(n):=1+\frac12+\frac13+\cdots+\frac1n.
\]
It has been known since the nineteenth century that by the Wolstenholme Theorem \cite{Wolstenholme},  for any prime $p\ge5$ the numerator of $H(p-1)$ is multiple of $p^2$. Many researchers have investigated the arithmetic properties of harmonic numbers and related problems; in the last century, Bleicher and Erd\"os have studied the so-called harmonic subsums \cite{BleicherE1975} while random harmonic sums and harmonic series have been investigated in probabilistic terms (see, for example, \cite{Bettin, GTZ,  Schmuland, Worley}).

In 1991, Eswarathasan and Levine \cite{EL} introduced the set $J_p$ of the positive integers $n$ whereby $H(n)$ is a multiple of $p$ and conjectured that $J_p$ is finite for all primes $p$. Eswarathasan and Levine also introduced an algorithm to count the elements of $J_p$ in the event that $J_p$ is finite, which was improved by Boyd \cite{Boyd} in 1994. Boyd determined $J_p$ for all $p\le 547$, except 83, 127 and 397. It is easy to show that $J_2=\emptyset$ and $J_3=\{2,7,22\}$, while for all $p\geq 5$, 
\[
\{p-1,\,p(p-1),\,p^2-1\}\subseteq J_p.
\]
The primes for which $J_p$ contains only these three elements are called ``harmonic'': for instance $p=5$ is harmonic as $J_5=\{4,20,24\}$ (see \cite{EL} for further details). The case $p=3$ was also treated individually in the paper by Kamano \cite{kamano}.

\noindent Sanna \cite{sanna} recently gave an upper bound for the number of elements of $J_p$ showing that
\[
\#J_p(x)\le 129 p^{\frac{2}{3}}x^{0.765}
\] 
where $J_p(x)=J_p\cap[1,x]$. This upper bound was improved by Chen and Wu \cite{wu-chen}:
\[
\#J_p(x)\le 3x^{\frac{2}{3}+\frac{1}{25\log p}}.
\]
They also tackled the alternating harmonic sum \cite{wu-chen2}. Recently Leonetti and Sanna also studied the $p$-adic valuation of $H(n, k)$, for $n\geq k$, a kind of generalized Harmonic numbers defined as:
\[
H(n,k):=\sum_{1\le i_1< \cdots < i_k\le n}\frac{1}{i_1\cdots i_k},
\]
which are closely linked to the Stirling numbers of the first kind.

An interesting connection between harmonic numbers and Bernoulli numbers was discovered by Boyd \cite[\S 4]{Boyd}. This connection concerns Wolstenholme primes that are defined as the primes $p$ that divide the numerator of the Bernoulli numbers $B_{p-3}$. The only two known Wolstenholme primes are 16843 and 2124679, but it is conjectured that infinitely many such primes exist. The latest search for Wolstenholme primes in 2007 found that those were the only two up to $10^9$ (see \cite{mcintosh}).
In this paper, we explored this connection by identifying different behaviours of $J_p$ according to whether $p$ is or is not a Wolstenholme prime.

We studied the $p$-adic valuation of the sequence of harmonic numbers $H(p^mn)$ for fixed $(n,p)$ where $p\nmid n$, and we identified that it follows one out of three patterns. These patterns differ according to whether $p$ is a Wolstenholme prime or not. Interesting phenomena occur in pairs $(n,p)$ such that $\nu_p(H(n))=3$, whose only known occurrences up to today were $n=848,9338,10583,3546471722268916272$ for $p=11$, one $n\geq 10^5$ for $p=83$, which were found by Boyd \cite{Boyd}, $(n=16842,\,p=16843$) and $(n=2124678,\,p=2124679)$, which are Wolstenholme primes and satisfy $\nu_{16843}(H(16842))=3$ and $\nu_{2124678}(H(2124679))\geq 3$. Finally, Boyd \cite{Boyd} conjectured that there are no pairs $(n,p)$ such that $\nu_p(H(n))\geq 4$.

\section{Results}

In \cite{Boyd} and \cite{EL} the following Lemma has been proved:
\begin{Lemma}\label{Boyd_3.1}
For any  prime $p\geq 5$, if $\nu_p(H(n))\leq 2$ then 
\[
\nu_p(H(pn))=\nu_p(H(n))-1.
\]
\end{Lemma}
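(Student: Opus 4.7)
The plan is to decompose $H(pn)$ into its contributions from denominators divisible by $p$ and those coprime to $p$, show the second contribution is $p$-adically small via Wolstenholme's theorem, and conclude by the ultrametric inequality.

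First, I would split the sum according to whether the index is a multiple of $p$:
\[
H(pn) \;=\; \sum_{j=1}^{n}\frac{1}{pj} \;+\; \sum_{j=0}^{n-1}\sum_{r=1}^{p-1}\frac{1}{jp+r} \;=\; \frac{H(n)}{p} \;+\; S,
\]
where $S$ collects the reciprocals of integers coprime to $p$. Note that $\nu_p(H(n)/p) = \nu_p(H(n)) - 1 \leq 1$ by hypothesis, so the game is reduced to showing $\nu_p(S) \geq 2$.

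Next, I would exploit the $p$-adic geometric expansion. Since $|jp/r|_p \leq 1/p$ for $r \in \{1,\dots,p-1\}$, one has
\[
\frac{1}{jp+r} \;=\; \frac{1}{r}\sum_{k\geq 0}\!\left(-\frac{jp}{r}\right)^{\!k} \;\equiv\; \frac{1}{r} - \frac{jp}{r^2} \pmod{p^2}.
\]
Summing over $r$ yields
\[
\sum_{r=1}^{p-1}\frac{1}{jp+r} \;\equiv\; H(p-1) \;-\; jp\sum_{r=1}^{p-1}\frac{1}{r^2} \pmod{p^2}.
\]
Wolstenholme's theorem (valid precisely for $p \geq 5$) gives $\nu_p(H(p-1)) \geq 2$; likewise, using $r^{-2} \equiv r^{p-3} \pmod p$ with $(p-1)\nmid (p-3)$ yields the companion congruence $\sum_{r=1}^{p-1} r^{-2} \equiv 0 \pmod p$, so the factor $jp$ upgrades this to divisibility by $p^2$. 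Each inner sum therefore vanishes modulo $p^2$, and summing over $j$ gives $\nu_p(S) \geq 2$.

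Finally, since $\nu_p(H(n)/p) \leq 1 < 2 \leq \nu_p(S)$, the ultrametric inequality forces equality and we obtain
\[
\nu_p(H(pn)) \;=\; \nu_p\!\left(\frac{H(n)}{p}\right) \;=\; \nu_p(H(n)) - 1.
\]
The main obstacle is the second step: controlling $\nu_p(S)$ requires the two Wolstenholme-type congruences $H(p-1) \equiv 0 \pmod{p^2}$ and $\sum r^{-2} \equiv 0 \pmod p$, and this is precisely where the hypothesis $p \geq 5$ enters (both congruences fail at $p=3$). Once these are in hand, the rest is a matter of bookkeeping with the geometric series and the ultrametric property.
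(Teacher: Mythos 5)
Your proof is correct and follows essentially the route the paper intends: it establishes the stronger congruence $H(pn)=\frac{H(n)}{p}+O(p^2)$ (the paper's Proposition~\ref{comp_somma}, quoted from Boyd without proof) and then concludes by the ultrametric inequality exactly as the paper does. What you add is a self-contained verification of that congruence, via the $p$-adic expansion of $\frac{1}{jp+r}$ together with the two Wolstenholme-type congruences $H(p-1)\equiv 0 \pmod{p^2}$ and $\sum_{r=1}^{p-1}r^{-2}\equiv 0\pmod{p}$, a step the paper delegates entirely to the references.
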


Boyd \cite{Boyd} gives a stronger version of this lemma, which we have postponed to Proposition \ref{comp_somma}. At this introductory stage, in fact, a weaker version is sufficient to explains what we will call the ``descent phenomenon'' observed in Table \ref{tabella} of the $p$-adic valuations of harmonic numbers. Starting from that point, we can see that the behaviour of harmonic numbers when $\nu_p(H(n))\leq 2$ in terms of $p$-adic valuation is the following: $\nu_p(H(p^mn))=\nu_p(H(n))-m$ for all $m\in\N$; here we give a fragment for the table of $p=5$.

\begin{table}[h]
\caption{Table for $p=5$: this shows the $5$-adic valuation of $H(5m+k)$, where $m$ is the column index and $k$ is the row index.}\label{tabella}
\centering
\begin{tabular}{ | C{1cm} || C{1cm} | C{1cm} | C{1cm} | C{1cm} | C{1cm} | }
 \hline
 \multicolumn{6}{|c|}{$\nu_5(H(5m+k))$} \\
 \hline
 	&	$k=0$	& $k=1$	& $k=2$	& $k=3$	& $k=4$	\\
 	\hline
 	\hline
$m=0$	& 	$\infty$	&0	& 0	& 0	& 2	\\
1		&-1	&-1	&-1	&-1	&-1	\\
2		&-1	&-1	&-1	&-1	&-1	\\
3		&-1	&-1	&-1	&-1	&-1	\\
4		& 1	& 0	& 0	& 0	& 1	\\
5		&-2	&-2	&-2	&-2	&-2	\\
6		&-2	&-2	&-2	&-2	&-2	\\
$\vdots$ & $\vdots$ & $\vdots$ & $\vdots$ & $\vdots$ & $\vdots$\\
19		&-2	&-2	&-2	&-2	&-2	\\
20		& 0	& 0	& 0	& 0	& 0	\\
21		&-1	&-1	&-1	&-1	&-1	\\
22		&-1	&-1	&-1	&-1	&-1	\\
23		&-1	&-1	&-1	&-1	&-1	\\
24		& 0	& 0	& 0	& 0	& 0	\\
25		&-3	&-3	&-3	&-3	&-3	\\
$\vdots$ & $\vdots$ & $\vdots$ & $\vdots$ & $\vdots$ & $\vdots$\\
124		&-1	&-1	&-1	&-1	&-1	\\
125		&-4	&-4	&-4	&-4	&-4	\\
126		&-4	&-4	&-4	&-4	&-4	\\
$\vdots$ & $\vdots$ & $\vdots$ & $\vdots$ & $\vdots$ & $\vdots$\\
\hline
\end{tabular}
\end{table}

However, Lemma \ref{Boyd_3.1} cannot be applied to the case $\nu_p(H(n))\geq 3$. We provide formulas that can apply to this case as well. 
\begin{Definition}
The Bernoulli numbers $B_n$ are defined using the generating function 
\[
\frac{t}{1-e^t}=\sum\limits_{k=0}^{\infty}B_k \frac{t^k}{k!}.
\]
\end{Definition}

\begin{Definition}
A prime $p$ is a Wolstenholme prime if $p\mid B_{p-3}$.
\end{Definition}

\begin{Proposition}
$p$ is a Wolstenholme prime if and only if $p^3\mid H(p-1)$.
\end{Proposition}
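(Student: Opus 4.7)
The plan is to reduce $H(p-1)$ modulo $p^3$ to an explicit multiple of the Bernoulli number $B_{p-3}$, so that the asserted equivalence becomes immediate. The target is the classical Glaisher-type identity
\[
H(p-1) \equiv -\frac{p^2}{3}\,B_{p-3} \pmod{p^3},
\]
from which, since $p\geq 5$ makes $3$ invertible modulo $p$, one reads off $p^3\mid H(p-1)\iff p\mid B_{p-3}$, i.e.\ $p$ is Wolstenholme.

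The first step is symmetrization. Pairing $\frac{1}{k}+\frac{1}{p-k}=\frac{p}{k(p-k)}$ gives
\[
H(p-1)=p\sum_{k=1}^{(p-1)/2}\frac{1}{k(p-k)},
\]
and expanding $(p-k)^{-1}=-k^{-1}(1+p/k+p^2/k^2+\cdots)$ as a $p$-adic geometric series and truncating modulo $p^3$ yields
\[
H(p-1)\equiv -pS_2-p^2 S_3 \pmod{p^3},
\]
where $S_r:=\sum_{k=1}^{(p-1)/2}k^{-r}$. I would then eliminate $S_3$ by applying the analogous expansion of $(p-k)^{-2}$ to the full power sum $T_2:=\sum_{k=1}^{p-1}k^{-2}$: the identity $T_2\equiv 2S_2+2pS_3\pmod{p^2}$ rearranges to $pS_2\equiv \tfrac{p}{2}T_2-p^2S_3\pmod{p^3}$, and upon substitution the two $p^2 S_3$ contributions cancel, leaving the clean reduction
\[
H(p-1)\equiv -\frac{p}{2}\sum_{k=1}^{p-1}\frac{1}{k^2} \pmod{p^3}.
\]

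The last input is the classical congruence $\sum_{k=1}^{p-1}k^{-2}\equiv \tfrac{2p}{3}B_{p-3}\pmod{p^2}$, valid for $p\geq 5$; substituting it into the previous line produces the target congruence for $H(p-1)$ and finishes the proof. The principal obstacle is this Bernoulli-number congruence itself: I would either cite it from the standard literature on Bernoulli numbers and Wolstenholme's theorem, or derive it by writing $k^{-2}\equiv k^{2p-4}\pmod{p}$, lifting via Fermat quotients to modulo $p^2$, and invoking Faulhaber's formula for $\sum_{k=1}^{p-1}k^{2p-4}$, in which only the $B_{p-3}$ contribution survives modulo $p^2$ (terms involving other $B_{2j}$ vanish because their indices fall outside the range $0<2j<p-1$ or acquire an extra factor of $p$). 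Everything else is routine bookkeeping with geometric series and the symmetry $k\leftrightarrow p-k$.
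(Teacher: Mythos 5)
The paper does not actually prove this Proposition: it is stated without proof and attributed to Gardiner's 1988 paper, so there is no internal argument to compare yours against. Your route --- establishing the Glaisher congruence $H(p-1)\equiv -\tfrac{p^2}{3}B_{p-3}\pmod{p^3}$ by pairing $k$ with $p-k$, expanding $p$-adically to get $H(p-1)\equiv -pS_2-p^2S_3$, eliminating $S_3$ against $T_2=\sum_{k=1}^{p-1}k^{-2}$ to reach $H(p-1)\equiv -\tfrac{p}{2}T_2\pmod{p^3}$, and then invoking $T_2\equiv \tfrac{2p}{3}B_{p-3}\pmod{p^2}$ --- is the standard proof and is correct in all its main steps (I checked the intermediate congruences numerically at $p=5$). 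Two small points deserve tightening. First, your sketched derivation of $T_2\equiv\tfrac{2p}{3}B_{p-3}\pmod{p^2}$ via $k^{-2}\equiv k^{2p-4}$ is loose as stated: that replacement is valid only modulo $p$, and lifting to modulo $p^2$ introduces a Fermat-quotient correction $2p\sum_k k^{-2}q_k$ that does not obviously vanish. The clean fix is to use the exponent $p(p-1)-2=\varphi(p^2)-2$, for which $k^{p(p-1)-2}\equiv k^{-2}\pmod{p^2}$ exactly by Euler's theorem, then apply Faulhaber's formula and Kummer's congruence $\tfrac{B_{p(p-1)-2}}{p(p-1)-2}\equiv\tfrac{B_{p-3}}{p-3}\pmod{p}$ --- which is precisely the machinery the paper itself uses in Theorem \ref{Formula1}, so your argument would mesh well with the rest of the text; alternatively, citing the congruence from the literature is perfectly acceptable. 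Second, the final equivalence tacitly uses that $B_{p-3}$ is $p$-integral, which follows from von Staudt--Clausen (Corollary \ref{CorCVS}) since $(p-1)\nmid(p-3)$ for $p\geq 5$; this should be said explicitly so that ``$p\mid B_{p-3}$'' is meaningful.
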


\noindent This result explains the fact that Wolstenholme primes give occurrences of $\nu_p(H(n))= 3$ for $n=p-1$ as in the introduction. It is important to report this result (which comes from a paper submitted by Gardiner \cite{gardiner} in 1988), as it shows also other equivalent definitions of Wohstenholme primes, that, however, we did not use in this article.

Below we present the main results of our research:

\begin{Theorem}\label{non-W-primes}
Let $p\geq 5$ be a non-Wolstenholme prime and $n\in\N$ such that $p\nmid n$ and $\nu_p(H(n))\geq 3$. Then
\begin{enumerate}
\item If $\nu_p(H(n))\geq 4$, then $\nu_p(H(pn))=2$ and then we perceive the ``descent phenomenon'',
\item If $\nu_p(H(n))=3$ and there does not exists $m\in\N$ such that $\nu_p(H(p^mn)) \leq 2$, then $\nu_p(H(p^mn))=2m+3$ for all $m\in\N$,
\item If $\nu_p(H(n))=3$ and there exists $m\in\N$ such that $\nu_p(H(p^mn)) \leq 2$, then there exists $M\in\N$ such that $\nu_p(H(p^Mn))=0$ and 
\[
\nu_p(H(p^mn))=
\begin{cases}
\begin{aligned}
&2m+3&\quad\text{if }m\leq \frac{M}{3}+1\\
&M-m&\quad\text{if }m> \frac{M}{3}+1
\end{aligned}
\end{cases}
\]
for all $m\in\N$.
\end{enumerate}
\end{Theorem}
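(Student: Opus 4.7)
The plan is to iterate the refined congruence from Proposition~\ref{comp_somma}, which strengthens Lemma~\ref{Boyd_3.1} to an explicit form
\[
pH(pN)\equiv H(N)-\frac{N^{2}p^{3}B_{p-3}}{3}\pmod{p^{\rho(N)}},
\]
derived from the geometric-series expansion of $1/(jp+r)$ together with the Wolstenholme--Glaisher identities for $\sum_{r=1}^{p-1}r^{-k}$. The non-Wolstenholme hypothesis $p\nmid B_{p-3}$ makes the correction term have $p$-adic valuation exactly $2\nu_{p}(N)+3$; the modulus $\rho(N)$ grows with $\nu_{p}(N)$, and higher-order versions of the congruence will be needed as $m$ increases.

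For Part~(1), I would apply the congruence with $N=n$ (so $\nu_{p}(N)=0$). The hypothesis $\nu_{p}(H(n))\geq 4$ combined with the correction term of $\nu_{p}=3$ shows that the right-hand side has $\nu_{p}=3$, so $\nu_{p}(pH(pn))=3$ and thus $\nu_{p}(H(pn))=2$.

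For Parts~(2) and~(3), I would induct on $m\geq 0$, starting from the hypothesis $\nu_{p}(H(n))=3$. At the inductive step, assuming $\nu_{p}(H(p^{m-1}n))=2m+1$, write $H(p^{m-1}n)=p^{2m+1}\alpha_{m-1}$ with $p\nmid\alpha_{m-1}$, and specialize the congruence at $N=p^{m-1}n$:
\[
pH(p^{m}n)\equiv p^{2m+1}\!\left(\alpha_{m-1}-\frac{n^{2}B_{p-3}}{3}\right)\pmod{p^{\,2m+4}}.
\]
Either the bracket vanishes modulo $p$ (the \emph{cancellation} case) or it does not. If cancellation persists at every step (the hypothesis of Part~(2)), then pushing the congruence to one higher order will show that the next contribution has $\nu_{p}$ exactly $2m+4$, yielding $\nu_{p}(H(p^{m}n))=2m+3$ precisely. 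If instead cancellation first fails at some step $k+1$ (Part~(3)), then $\nu_{p}(H(p^{k+1}n))=2k+2$, and for all subsequent iterations the correction term in the congruence acquires strictly larger $p$-valuation than $H(p^{m-1}n)$ itself. The recursion then reduces to a strict decrease of $\nu_{p}$ by one per step, which continues into the regime governed by Lemma~\ref{Boyd_3.1} and terminates at $\nu_{p}=0$ when $m=M$. Identifying the break-point as $m=M/3+1$ is then a direct bookkeeping exercise.

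The hardest part will be establishing the exactness of $\nu_{p}(H(p^{m}n))=2m+3$ in Part~(2): proving that iterated cancellations produce valuation \emph{exactly} $2m+3$, not any larger value. This requires pushing Proposition~\ref{comp_somma} beyond the stated modulus and bringing in higher Glaisher-type congruences for the sums $\sum_{r=1}^{p-1}r^{-k}$ with $k\geq 3$. The non-Wolstenholme hypothesis enters critically in these refinements, both to keep the leading correction at $\nu_{p}=3$ and to prevent the next-order term from vanishing modulo $p$.
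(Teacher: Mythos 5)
Your Part (1) is sound and close in spirit to the paper's: the refined one\hbox{-}step congruence $pH(pn)\equiv H(n)-\tfrac{n^{2}p^{3}}{3}B_{p-3}\pmod{p^{4}}$ for $p\nmid n$ is correct (it is the stronger form of Boyd's Lemma 3.1 underlying Proposition \ref{comp_somma}), and with $\nu_p(H(n))\ge 4$ and $p\nmid B_{p-3}$ it gives $\nu_p(H(pn))=2$; the paper reaches the same conclusion by evaluating Theorem \ref{Formula1} at $m=3$ and descending via Lemma \ref{casobase_m=2}.

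For Parts (2) and (3), however, there is a genuine gap in the claimed iteration. When $N=p^{m-1}n$, the correction term in your congruence is $-\tfrac{N^{2}p^{3}}{3}B_{p-3}$, of valuation $2m+1$, but the modulus you assert, $p^{2m+4}$, is not attainable: the exact coefficient of $(Np)^{2}$ in $pH(pN)-H(N)$ is $-\tfrac{1}{3}B_{\varphi(p^{K})-2}\cdot(\text{unit})$, and Kummer's congruence identifies it with $-\tfrac{1}{3}B_{p-3}$ only modulo $p$. Hence the one\hbox{-}step congruence holds only modulo $p^{2m+2}$, which lets you detect \emph{whether} cancellation occurs but never lets you conclude $\nu_p(pH(p^{m}n))=2m+4$ exactly; pushing to higher order would require Kummer congruences modulo $p^{2}$ and $p^{3}$, whose correction terms are not controlled by $B_{p-3}$ alone, and the non\hbox{-}Wolstenholme hypothesis gives you no handle on them. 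More fundamentally, the exactness $\nu_p(H(p^{m}n))=2m+3$ in Part (2) is not a consequence of local data at steps $\le m$: the paper obtains it by applying formula \eqref{m_power:first} with base point $p^{j}n$ and exponent $m$ taken \emph{large}, so that the dominant term of the Bernoulli double sum has valuation $2j+3-m\le 0$ (where the $O(p)$ error is harmless and only the mod\hbox{-}$p$ value of the coefficient, i.e.\ $p\nmid B_{p-3}$, matters); since the hypothesis forces $H(p^{m+j}n)=O(p)$ for \emph{all} $m$, the ultrametric inequality compels $H(p^{j}n)/p^{m}$ to cancel that term exactly, yielding $\nu_p(H(p^{j}n))=2j+3$. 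Your forward recursion cannot reproduce this because the exact valuation at step $j$ is pinned down by the behaviour of arbitrarily later iterates, not by the preceding ones; to close the argument you would have to replace the step\hbox{-}by\hbox{-}step scheme with the global formula for $H(p^{m}n)$ in terms of the coprime base point and a term\hbox{-}by\hbox{-}term valuation analysis of the double Bernoulli sum, which is precisely Theorem \ref{Formula1} and the case analysis in the paper's proof.
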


\begin{Theorem}\label{W-primes}
Let $p\geq 5$ be a Wolstenholme prime, and $n\in\N$ such that $p\nmid n$ and $\nu_p(H(n))\geq 3$. Then
\begin{enumerate}
\item If $\nu_p(H(n))=3$, then $\nu_p(H(pn))=2$ and then we perceive the ``descent phenomenon'',
\item If $\nu_p(H(n))\geq 4$ and there does not exists $m\in\N$ such that $\nu_p(H(p^mn)) \leq 3$, then $\nu_p(H(p^mn))\geq 2m+4$ for all $m\in\N$,
\item If $\nu_p(H(n))\geq 4$ and there exists $m\in\N$ such that $\nu_p(H(p^mn)) \leq 3$, then there exists $M\in\N$ such that $\nu_p(H(p^Mn))=0$ and
\[
\begin{cases}
\begin{aligned}
&\nu_p(H(p^mn))\geq 2m+4&\quad\text{if }m\leq \frac{M+1}{3}+1\\
&\nu_p(H(p^mn))=M-m&\quad\text{if }m> \frac{M+1}{3}+1
\end{aligned}
\end{cases}
\]
for all $m\in\N$.
\end{enumerate}
\end{Theorem}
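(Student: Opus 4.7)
The plan is to follow the strategy used for Theorem~\ref{non-W-primes} while systematically shifting the critical $p$-adic valuation bounds up by one unit, reflecting the extra divisibility of $H(p-1)$ (equivalently, of $B_{p-3}$) granted by the Wolstenholme hypothesis. Concretely, I would use Proposition~\ref{comp_somma} to expand
\[
p\,H(pN) - H(N) = \sum_{j \geq 0}(-1)^j\, p^{j+1}\Bigl(\sum_{k=0}^{N-1}k^j\Bigr) H_{j+1}(p-1)
\]
with $N = p^m n$, and then derive the central estimate
\[
\nu_p\bigl(p\,H(p^{m+1} n) - H(p^m n)\bigr) \geq 2m+4 \qquad (p \nmid n).
\]
The key mechanism is that the $j=0$ and $j=1$ contributions telescope via Glaisher's congruences $H(p-1) \equiv -\tfrac{p^2}{3} B_{p-3} \pmod{p^4}$ and $H_2(p-1) \equiv \tfrac{2p}{3} B_{p-3} \pmod{p^2}$ into a multiple of $p^3 N^2 B_{p-3}/3$; the Wolstenholme condition $p \mid B_{p-3}$ then supplies the additional factor of $p$ that lifts the valuation from the non-Wolstenholme bound $2m+3$ to $2m+4$.

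Setting $v_m := \nu_p(H(p^m n))$, this estimate yields a \emph{forced descent principle}: whenever $v_m < 2m+4$, the error term in $pH(p^{m+1}n) = H(p^m n) + (\text{error})$ has strictly larger $p$-adic valuation than $H(p^m n)$, so $\nu_p(pH(p^{m+1}n)) = v_m$ and hence $v_{m+1} = v_m - 1$. Since $v_m - 1 < 2(m+1)+4$ automatically, the descent propagates, giving $v_{m+k} = v_m - k$ for every $k \geq 0$. Item~(1) is then immediate: $v_0 = 3 < 4$ triggers the descent at the very first step, yielding $v_1 = 2$, after which Lemma~\ref{Boyd_3.1} takes over the entire remainder of the sequence.

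For item~(2) I would argue by contradiction: if $v_m \geq 4$ for every $m$ yet $v_{m_0} < 2m_0 + 4$ for some minimal $m_0$, the forced descent would drag $v_{m_0 + v_{m_0} - 3}$ down to $3$, contradicting the hypothesis; thus $v_m \geq 2m+4$ throughout. For item~(3), let $m_0$ be minimal with $v_{m_0} < 2m_0 + 4$, whose existence is guaranteed by the hypothesis that $v_{m_1} \leq 3$ for some $m_1$. Before $m_0$ the bound $v_m \geq 2m+4$ holds by minimality; from $m_0$ onward the forced descent gives $v_m = v_{m_0} - (m-m_0) = M - m$ with $M := m_0 + v_{m_0}$, and in particular $v_M = 0$. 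The admissible values $v_{m_0} \in \{2m_0+1,\,2m_0+2,\,2m_0+3\}$, coming from the inequality $v_{m_0}+1 \geq \min(v_{m_0-1},\,\nu_p(p H(p^{m_0}n)-H(p^{m_0-1}n))) \geq 2m_0 + 2$ at the previous step, then translate into the stated transition threshold $m > (M+1)/3 + 1$ separating the two regimes. The main technical obstacle is the uniform bound $\nu_p(pH(p^{m+1}n)-H(p^m n)) \geq 2m + 4$: beyond the $j=0,1$ telescoping, the tail $j \geq 2$ must also be controlled, for which one invokes finer Glaisher--Sun-type congruences for $H_3(p-1),\,H_4(p-1),\ldots$ in conjunction with $p \mid B_{p-3}$, ensuring that no higher-order term spoils the cancellation.
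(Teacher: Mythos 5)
Your overall architecture is genuinely different from the paper's and, at the level of logic, cleaner: the paper never works with a single-step recursion, but instead proves a closed-form expansion of $H(p^mn)$ in terms of $H(n)/p^m$ plus a double sum of Bernoulli terms (Theorem \ref{Formula1}), valid only modulo $p$, and then extracts the valuations by an ultrametric argument in which $m$ is sent to infinity (case (2)) or set equal to $M$ (case (3)) so that every relevant term has \emph{non-positive} valuation and the $O(p)$ error is harmless. Your ``forced descent principle'' would indeed deliver items (1)--(3) once the central estimate $\nu_p\bigl(pH(p^{m+1}n)-H(p^mn)\bigr)\geq 2m+4$ is in hand, and the combinatorics of $v_{m_0}\in\{2m_0+1,2m_0+2,2m_0+3\}$ and $M=m_0+v_{m_0}$ is correct (your derived threshold, roughly $m\le m_0-1\approx \frac{M}{3}-2$ for the first regime, does not literally match the stated $\frac{M+1}{3}+1$, but the paper's own proof exhibits the same discrepancy, deriving $M\ge 3j+4$ as the condition for $\nu_p(H(p^jn))\ge 2j+4$, so I do not count this against you).

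The genuine gap is the central estimate itself, which is the entire analytic content of the theorem and which your proposed mechanism cannot deliver for $m\geq 2$. In your expansion the $j$-th term is $(-1)^jp^{j+1}S_j(N)H_{j+1}(p-1)$ with $N=p^mn$, and the linear-in-$N$ parts of the $S_j(N)$ give contributions of valuation only about $m+4$ (e.g.\ $pN\,H(p-1)$ has valuation $1+m+\nu_p(H(p-1))$, possibly exactly $m+4$). To push these up to the target $2m+4$ you need them to cancel to a precision that \emph{grows linearly in $m$}: dividing out $N$, you must show $pH(p-1)+\tfrac{p^2}{2}H_2(p-1)+\cdots=O(p^{m+4})$ for every $m$, i.e.\ that the entire linear part vanishes identically, and the fixed congruences you invoke ($H(p-1)\equiv-\tfrac{p^2}{3}B_{p-3}\pmod{p^4}$, $H_2(p-1)\equiv\tfrac{2p}{3}B_{p-3}\pmod{p^2}$) only control this modulo $p^5$ and $p^4$ respectively, which suffices for $m\le 1$ but not beyond. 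The required cancellation is a global one across all $j$ simultaneously (visible, e.g., after symmetrizing $r\leftrightarrow p-r$, where $\sum_{\ell=0}^{N-1}(2\ell+1)=N^2$ has no linear term), and bookkeeping it to all orders is exactly what the paper's Faulhaber--Bernoulli formula packages; note that even that formula is only established with an $O(p)$ error, and the paper's proof strategy is specifically designed so that an $O(p)$ error suffices. Your plan therefore requires proving a high-precision (error $O(p^{2m+4})$) version of Theorem \ref{Formula1}, a substantially harder statement than anything you sketch, and ``finer Glaisher--Sun-type congruences'' for the individual $H_s(p-1)$ --- which are again fixed-precision statements --- cannot substitute for it. As written, the proposal proves the theorem only for the first step $m=0$ (hence item (1)), and the induction collapses thereafter.
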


\section{Setting the problem}

In this section we will use the Big-O notation,
\[
\nu_p(x-y)\geq k\quad \Longleftrightarrow\quad x-y=O(p^k).
\]

\begin{Proposition}\label{equiv_p-adica}
Let $a,b\in\Z$, $(b,p)=1$. Then
\[
\frac{a}{b}=
\frac{a}{b+hp}+O(p)
\]
for all $h\in\Z$.
\end{Proposition}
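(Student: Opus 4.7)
The plan is to verify the claim by direct computation: write the difference of the two fractions over a common denominator and show explicitly that a factor of $p$ appears in the numerator while the denominator stays coprime to $p$.

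First I would compute
\[
\frac{a}{b}-\frac{a}{b+hp}=\frac{a(b+hp)-ab}{b(b+hp)}=\frac{ahp}{b(b+hp)}.
\]
If $ah=0$ the difference vanishes and the statement is trivial, so assume $ah\neq 0$. I would then analyze the $p$-adic valuation of numerator and denominator separately. The numerator $ahp$ carries an explicit factor of $p$, so $\nu_p(ahp)\geq 1$. For the denominator, the hypothesis $(b,p)=1$ gives $\nu_p(b)=0$; moreover $b+hp\equiv b\not\equiv 0\pmod{p}$ (which in particular shows $b+hp\neq 0$, so the expression is well-defined), hence $\nu_p(b+hp)=0$. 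Consequently
\[
\nu_p\!\left(\frac{a}{b}-\frac{a}{b+hp}\right)=\nu_p(ahp)-\nu_p(b(b+hp))\geq 1,
\]
which by the Big-O convention stated at the beginning of the section is exactly the claim.

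There is no real obstacle here: the only thing to be careful about is ruling out the degenerate case $b+hp=0$, but this is automatic from $(b,p)=1$, since $b=-hp$ would force $p\mid b$. The proposition is essentially a bookkeeping lemma that will let later arguments replace any denominator by a congruent one modulo $p$ without changing residues, so I expect the authors use it repeatedly as a rewriting tool rather than as a substantive step.
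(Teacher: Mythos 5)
Your proof is correct and follows exactly the paper's argument: both compute the difference over a common denominator as $\frac{ahp}{b(b+hp)}$ and observe that the explicit factor of $p$ in the numerator together with $(b,p)=1$ forces the $p$-adic valuation to be at least $1$. Your additional remarks ruling out the degenerate cases $ah=0$ and $b+hp=0$ are sensible bookkeeping that the paper leaves implicit.
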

\begin{proof}
We have
\[
\nu_p\left(\frac{a}{b}-\frac{a}{b+hp}\right)=\nu_p\left(\frac{ahp}{b(b+hp)}\right)\geq 1,
\]
\end{proof}

\begin{Proposition}\label{equiv_congr-val}
Let $q\in\Q$. Then $\nu_p(q)=k<1$ if and only if there exist $a,b\in\Z$, $b\neq 0$ with $(a,p)=1$ and $(b,p)=1$ such that 
\[
q=
p^k\frac{a}{b}+O(p).
\] 
\end{Proposition}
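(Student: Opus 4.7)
The statement is an equivalence between an exact $p$-adic valuation assertion and a ``leading term plus error'' representation. Since $k$ is an integer (as the $p$-adic valuation of a rational is always in $\Z \cup \{\infty\}$), the hypothesis $k<1$ means $k \le 0$. The key tool will be the non-archimedean (ultrametric) property of $\nu_p$: if $\nu_p(x) \neq \nu_p(y)$, then $\nu_p(x+y) = \min(\nu_p(x), \nu_p(y))$.

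For the forward direction, the plan is to argue directly from the definition of $\nu_p$. If $\nu_p(q) = k$, then $q/p^k$ has $p$-adic valuation zero, so writing $q/p^k$ in lowest terms (or after cancelling any factors of $p$) yields $q/p^k = a/b$ with $(a,p) = (b,p) = 1$. Then $q = p^k a/b$ holds exactly, so the equation $q = p^k a/b + O(p)$ is satisfied trivially with zero error.

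For the backward direction, assume $q = p^k a/b + O(p)$ with $(a,p) = (b,p) = 1$ and $k < 1$. Then $\nu_p(p^k a/b) = k + \nu_p(a) - \nu_p(b) = k$, while $\nu_p(q - p^k a/b) \ge 1$ by definition of $O(p)$. Since $k \le 0 < 1 \le \nu_p(q - p^k a/b)$, the two valuations are distinct, so applying the ultrametric property to $q = (q - p^k a/b) + p^k a/b$ gives $\nu_p(q) = \min(k, \nu_p(q - p^k a/b)) = k$, as required.

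The main obstacle, if any, is purely formal rather than mathematical: one must be careful that the hypothesis $k < 1$ is precisely what forces $\nu_p(p^k a/b) < \nu_p(O(p))$, so that the ultrametric equality (not merely inequality) can be invoked in the second direction. Without $k < 1$ the backward implication fails in general, since the error term could cancel the leading term.
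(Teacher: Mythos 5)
Your proposal is correct and follows essentially the same route as the paper: the forward direction by writing $q/p^k$ in lowest terms (which the paper dismisses as ``straightforward''), and the backward direction by the ultrametric equality $\nu_p(q)=\min\{\nu_p(p^k a/b),\nu_p(q-p^k a/b)\}=k$. Your explicit remark that $k<1\le\nu_p(q-p^k a/b)$ is what licenses the equality (rather than mere inequality) in the minimum is a welcome clarification that the paper leaves implicit.
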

\begin{proof}
\begin{itemize}
    \item[($\Longrightarrow$)] Straightforward.
    \item[($\Longleftarrow$)] We have
\[
\nu_p\left(q-p^k\frac{a}{b}\right)\geq 1,
\]
then
\[
\nu_p(q)=\min\left\{\nu_p\left(p^k\frac{a}{b}\right),\nu_p\left(q-p^k\frac{a}{b}\right)\right\}=k.
\]
\end{itemize}

\vspace{-.5cm}

\end{proof}

Now, in order to study the behaviour of $J(p)$, we must consider a very important property: 

\begin{Lemma}\label{lemma:discesa}
If $n\in J(p)$, $\nu_p(H(pn))\geq 0$,  we have
\[
H(pn+k)=\frac{H(n)}{p}+H(k)+O(p)
\]
for each $k=1,\dotsc p-1$.
\end{Lemma}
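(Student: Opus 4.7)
The plan is to decompose $H(pn+k)$ into two pieces and analyse each with Proposition \ref{equiv_p-adica}. First I would write
\[
H(pn+k) = H(pn) + \sum_{j=1}^{k}\frac{1}{pn+j}.
\]
In the tail, each denominator $pn+j$ satisfies $(pn+j,p)=1$ because $1\le j\le k\le p-1$, so Proposition \ref{equiv_p-adica} yields $\frac{1}{pn+j} = \frac{1}{j}+O(p)$; since the ultrametric inequality preserves the $O(p)$ error under finite sums, the tail collapses to $H(k)+O(p)$.

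Next I would analyse $H(pn)$ by separating multiples of $p$ from non-multiples in $\{1,\dots,pn\}$. The multiples contribute
\[
\sum_{i=1}^{n}\frac{1}{ip} = \frac{H(n)}{p},
\]
while the non-multiples split into $n$ blocks of length $p-1$:
\[
\sum_{j=0}^{n-1}\sum_{r=1}^{p-1}\frac{1}{jp+r}.
\]
Another application of Proposition \ref{equiv_p-adica} gives $\frac{1}{jp+r} = \frac{1}{r}+O(p)$, so this double sum equals $nH(p-1)+O(p)$. Wolstenholme's theorem ($\nu_p(H(p-1))\ge 2$ for $p\ge 5$) absorbs the $nH(p-1)$ term into $O(p)$, yielding $H(pn) = \frac{H(n)}{p}+O(p)$.

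Combining the two estimates gives the claimed identity. I do not foresee a genuine obstacle here: the argument is a careful reorganisation of the defining sum together with the congruence tools of the preceding propositions. The points requiring attention are (i) invoking the ultrametric inequality so that finite aggregates of $O(p)$ remain $O(p)$, and (ii) having Wolstenholme's theorem on hand to discard the $nH(p-1)$ contribution. The hypothesis $\nu_p(H(pn))\ge 0$ is in fact consistent with (and implied by) the conclusion, since $n\in J(p)$ forces $\nu_p(H(n)/p)\ge 0$, so it does not enter the derivation itself but rather flags the regime in which the identity will later be used.
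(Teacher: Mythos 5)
Your proof is correct, and the main decomposition --- writing $H(pn+k)=H(pn)+\sum_{i=1}^{k}\frac{1}{pn+i}$ and collapsing the tail to $H(k)+O(p)$ term by term via Proposition \ref{equiv_p-adica} --- is exactly the paper's. The only divergence is in how you obtain $H(pn)=\frac{H(n)}{p}+O(p)$: the paper simply cites Lemma \ref{Boyd_3.1} (what is actually needed is the quantitative form, Proposition \ref{comp_somma}, $H(pn)=\frac{H(n)}{p}+O(p^2)$, imported from Boyd), whereas you re-derive it from scratch by separating the multiples of $p$ from the $n$ blocks of non-multiples and discarding the resulting $nH(p-1)$ term by Wolstenholme's theorem. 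Your route is self-contained and makes explicit where $p\geq 5$ is used (Wolstenholme fails for $p=2,3$), at the cost of reproving a cited lemma and obtaining only an $O(p)$ error where Boyd gives $O(p^2)$ --- which is all this statement needs. Your closing remark is also accurate: the hypotheses $n\in J(p)$ and $\nu_p(H(pn))\geq 0$ play no role in the derivation of the congruence itself, which holds for every $n$; they only describe the regime in which the lemma is subsequently applied.
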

\begin{proof}
Using Lemma \ref{Boyd_3.1} and Proposition \ref{equiv_p-adica} we get
\[
H(pn+k)
=
H(pn)+\sum\limits_{i=1}^k\frac{1}{pn+i}
=
\frac{H(n)}{p}+\sum\limits_{i=1}^k\frac{1}{i}+O(p)
=
\frac{H(n)}{p}+H(k)+O(p),
\]
\end{proof}

From Lemma \ref{lemma:discesa} follows immediately the following:
\begin{Corollary}
If $\nu_p(H(pn))\geq 1$,  $H(pn+k)= H(k)+O(p)$.
\end{Corollary}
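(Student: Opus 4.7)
The plan is to derive the Corollary as an immediate consequence of Lemma~\ref{lemma:discesa}, using Lemma~\ref{Boyd_3.1} to control the ``extra'' term $H(n)/p$ that appears in that identity.

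First I would check that the hypotheses of Lemma~\ref{lemma:discesa} are satisfied. The assumption $\nu_p(H(pn))\geq 1$ in particular gives $\nu_p(H(pn))\geq 0$, which is one of the two hypotheses needed. It also forces $n\in J(p)$: indeed, by Lemma~\ref{Boyd_3.1} (valid as soon as $\nu_p(H(n))\leq 2$, and trivially otherwise) we have $\nu_p(H(n))=\nu_p(H(pn))+1\geq 2\geq 1$, so $p$ divides the numerator of $H(n)$, i.e.\ $n\in J(p)$.

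Next I would quantify the size of $H(n)/p$. From $\nu_p(H(n))\geq 2$ it follows that
\[
\nu_p\!\left(\frac{H(n)}{p}\right)=\nu_p(H(n))-1\geq 1,
\]
so in the Big-$O$ notation of Section~3 we have $H(n)/p = O(p)$.

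Finally, I would plug this into the formula provided by Lemma~\ref{lemma:discesa}, namely
\[
H(pn+k)=\frac{H(n)}{p}+H(k)+O(p),
\]
and absorb $H(n)/p$ into the error term to conclude $H(pn+k)=H(k)+O(p)$ for each $k=1,\dots,p-1$. There is no genuine obstacle here: the only subtlety to verify carefully is the chain $\nu_p(H(pn))\geq 1 \Rightarrow \nu_p(H(n))\geq 2 \Rightarrow H(n)/p=O(p)$, which is the content of Lemma~\ref{Boyd_3.1} combined with the definition of the $p$-adic valuation.
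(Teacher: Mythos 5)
Your proposal is correct and matches the paper's intended derivation: the Corollary is stated as an immediate consequence of Lemma~\ref{lemma:discesa}, with the term $H(n)/p$ absorbed into the $O(p)$ error. Your explicit justification that $\nu_p(H(pn))\geq 1$ forces $\nu_p(H(n))\geq 2$ (via Lemma~\ref{Boyd_3.1} when $\nu_p(H(n))\leq 2$, trivially otherwise) is exactly the step the paper leaves implicit.
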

It means that the set of indices $k=1,\dotsc p-1$ such that $\nu_p(H(pn+k))=0$ coincides with the set of indices $k=1,\dotsc p-1$ as before such that $\nu_p(H(k))=0$. This explains the repetition with the same patterns of zeros observed in Table \ref{tabella}, however we cannot yet predict the actual $p$-adic valuation, only whether it is null or not.
If, on the other hand, $\nu_p(H(pn))=0$, then given that 
\[
H(pn+k)
=
\frac{H(n)}{p}+H(k)
\]
for each $k=1,\dotsc p-1$, the study of these $k$ with $H(pn+k)= 0$ is reduced to the study of the $p$-adic valuation in modulus $p$ of the initial terms of a shifted sum. We now focus on harmonic numbers of the type $H(p^mn)$.

\subsection{Formulas for $H(p^mn)$}

We recall a stronger version of Lemma \ref{Boyd_3.1}: 
\begin{Proposition}[see \cite{Boyd}, Lemma 3.1]\label{comp_somma}
If $p\geq 5$ is prime and $n\ge1$, we have 
\[
H(pn)= \frac{H(n)}{p}+O(p^2).
\]
\end{Proposition}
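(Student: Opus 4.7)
The plan is to separate $H(pn)$ into its multiples-of-$p$ part and its coprime-to-$p$ part, recognize that the first part gives exactly $H(n)/p$, and then show that the second part has $p$-adic valuation at least $2$. Concretely, I would write
\[
H(pn)=\sum_{\substack{1\le k\le pn\\ p\mid k}}\frac{1}{k}+\sum_{\substack{1\le k\le pn\\ p\nmid k}}\frac{1}{k}=\frac{H(n)}{p}+S,\qquad S=\sum_{j=0}^{n-1}T_j,\qquad T_j=\sum_{r=1}^{p-1}\frac{1}{pj+r},
\]
so the whole problem reduces to proving $T_j=O(p^2)$ for every $j$.

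To handle $T_j$, I would expand each reciprocal in powers of $p$. A direct computation gives
\[
\frac{1}{pj+r}-\frac{1}{r}+\frac{pj}{r^{2}}=\frac{p^{2}j^{2}}{r^{2}(pj+r)},
\]
and since $(r,p)=(pj+r,p)=1$ the right hand side is $O(p^{2})$. Summing over $r=1,\ldots,p-1$ therefore yields
\[
T_j=H(p-1)-pj\sum_{r=1}^{p-1}\frac{1}{r^{2}}+O(p^{2}).
\]

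Now I would invoke Wolstenholme's theorem in its full strength: for any prime $p\ge5$, $\nu_p(H(p-1))\ge2$. For the second term, the map $r\mapsto r^{-1}\pmod p$ is a bijection on $\{1,\ldots,p-1\}$, so $\sum_{r=1}^{p-1}r^{-2}\equiv\sum_{r=1}^{p-1}r^{2}=\tfrac{(p-1)p(2p-1)}{6}\equiv 0\pmod p$ (using $p\ge5$ to clear the denominator), hence $\nu_p\bigl(\sum_{r=1}^{p-1}r^{-2}\bigr)\ge1$ and the contribution $pj\sum r^{-2}$ is also $O(p^{2})$. Combining everything gives $T_j=O(p^{2})$, whence $S=\sum_{j=0}^{n-1}T_j=O(p^{2})$ and the proposition follows.

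The only substantive step is the appeal to Wolstenholme's congruences $H(p-1)\equiv0\pmod{p^{2}}$ and $H_{2}(p-1)\equiv0\pmod p$; the rest is a direct Taylor-like expansion of $1/(pj+r)$ modulo $p^{2}$, essentially identical in spirit to Proposition \ref{equiv_p-adica} but pushed one order further.
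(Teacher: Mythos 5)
Your argument is correct. Note that the paper does not actually prove Proposition \ref{comp_somma}: it is imported verbatim from Boyd \cite{Boyd}, Lemma 3.1, so there is no internal proof to compare against. Your write-up supplies a valid self-contained proof along the standard lines: the splitting $H(pn)=\frac{H(n)}{p}+\sum_{j=0}^{n-1}T_j$ is exact, the identity $\frac{1}{pj+r}-\frac1r+\frac{pj}{r^2}=\frac{p^2j^2}{r^2(pj+r)}$ is correct and does give $T_j=H(p-1)-pj\sum_{r=1}^{p-1}r^{-2}+O(p^2)$, and the two congruences you invoke --- $\nu_p(H(p-1))\ge 2$ (Wolstenholme, stated in the introduction of the paper) and $\sum_{r=1}^{p-1}r^{-2}\equiv 0 \pmod p$ via the inversion bijection and $\sum r^2=\frac{(p-1)p(2p-1)}{6}$ --- are exactly what is needed, with $p\ge5$ used correctly in both places. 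The only point worth making explicit is that summing $n$ (respectively $p-1$) error terms each of valuation $\ge 2$ preserves the bound because the $p$-adic valuation of a sum is at least the minimum of the valuations, independently of the number of summands; this is immediate but is the reason the estimate is uniform in $n$.
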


\noindent We could conclude that, if $\nu_p(H(n))\leq 2$, then 
\[
\nu_p(H(pn))=\nu_p(H(n))-1.
\]

\begin{Lemma}\label{casobase_m=2}
If $p\geq 5$ is prime and $n\ge1$, we have
\[
H(p^2n)=\frac{H(n)}{p^2}+O(p),
\]
\end{Lemma}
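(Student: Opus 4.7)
The plan is to iterate Proposition \ref{comp_somma} twice. Apply the proposition with $pn$ in place of $n$ to get
\[
H(p^2 n) = \frac{H(pn)}{p} + O(p^2),
\]
and apply it once more with $n$ itself to obtain
\[
H(pn) = \frac{H(n)}{p} + O(p^2).
\]

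Substituting the second identity into the first yields
\[
H(p^2 n) = \frac{1}{p}\left(\frac{H(n)}{p} + O(p^2)\right) + O(p^2) = \frac{H(n)}{p^2} + \frac{O(p^2)}{p} + O(p^2).
\]
The only nontrivial bookkeeping is the observation that dividing an $O(p^2)$ quantity by $p$ produces an $O(p)$ quantity, because $\nu_p(x/p) = \nu_p(x) - 1 \geq 2 - 1 = 1$ whenever $\nu_p(x) \geq 2$; this is immediate from the definition stated at the start of Section~3. Together with the fact that $O(p^2) \subseteq O(p)$ (a higher valuation is a stronger condition), both error terms collapse into a single $O(p)$, giving the claim.

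There is essentially no obstacle: the main content has already been absorbed into Proposition \ref{comp_somma}, and this lemma is a purely formal iteration. If any subtlety arises, it is only in confirming that the remainder terms behave well under scalar multiplication and division by fixed powers of $p$, which is a direct consequence of the additivity/multiplicativity of $\nu_p$ and the equivalence $\nu_p(x-y) \geq k \Longleftrightarrow x - y = O(p^k)$ recalled at the beginning of the section.
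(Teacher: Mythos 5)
Your proof is correct and follows essentially the same route as the paper's: both apply Proposition \ref{comp_somma} twice (once at $pn$, once at $n$) and observe that dividing the $O(p^2)$ remainder by $p$ leaves an $O(p)$ term. Your version just spells out the valuation bookkeeping a little more explicitly.
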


\begin{proof}
Proposition \ref{comp_somma} implies that there exists $\al\in\Q$ with $\nu_p(\al)\geq 0$ such that 
\[
H(pn)=\frac{H(n)}{p} +O(p^2),
\]
then
\[
H(p^2n)
=
\frac{H(pn)}{p}+O(p)
=
\frac{H(n)}{p^2}+O(p).
\]
\end{proof}

\begin{Lemma}
Let $a,b\in\Z$, then $(b,p)=1$ and $k\geq 0$ an integer.
Then
\[
\frac{a}{p^kb}=\frac{ab^{-1}}{p^k}+O(p),
\]
where $b^{-1}$ denotes the inverse of $b$ modulo $p^{k+1}$.
\end{Lemma}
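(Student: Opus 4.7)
The plan is to subtract the two sides, put them over a common denominator, and use the defining congruence $bb^{-1}\equiv 1\pmod{p^{k+1}}$ to show that the resulting difference has $p$-adic valuation at least $1$.

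Concretely, I would start from
\[
\frac{a}{p^kb}-\frac{ab^{-1}}{p^k}=\frac{a(1-bb^{-1})}{p^kb}
\]
and then estimate the $p$-adic valuation of numerator and denominator separately. Since $b^{-1}$ is defined as the inverse of $b$ modulo $p^{k+1}$, we have $bb^{-1}=1+cp^{k+1}$ for some $c\in\Z$, hence $\nu_p(1-bb^{-1})\geq k+1$. Because $a\in\Z$ gives $\nu_p(a)\geq 0$, while the denominator contributes $\nu_p(p^kb)=k$ (using $(b,p)=1$), the combined valuation is at least $(k+1)-k=1$, which is exactly the statement $\frac{a}{p^kb}=\frac{ab^{-1}}{p^k}+O(p)$.

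There is no real obstacle here: the lemma is essentially a rewriting of the definition of a $p$-adic inverse, phrased in the $O(p)$ notation introduced at the start of the section. The only minor point to be careful about is that when $a=0$ or more generally when $p\mid a$, the inequality $\nu_p\!\left(\frac{a(1-bb^{-1})}{p^kb}\right)\geq 1$ still holds (indeed becomes stronger), so no case distinction is required. I would present the computation as a single chain of equalities/inequalities without invoking any prior proposition beyond the basic properties of $\nu_p$.
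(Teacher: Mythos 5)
Your proposal is correct and follows exactly the same route as the paper: the paper's proof is precisely the one-line computation $\nu_p\bigl(\frac{a}{p^kb}-\frac{ab^{-1}}{p^k}\bigr)=\nu_p\bigl(\frac{a(1-bb^{-1})}{p^kb}\bigr)\geq 1$, and you have simply filled in the justification (that $\nu_p(1-bb^{-1})\geq k+1$ from the definition of the inverse modulo $p^{k+1}$, against a denominator of valuation $k$) that the paper leaves implicit.
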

\begin{proof}
We have
\[
\nu_p\left(\frac{a}{p^kb}-\frac{ab^{-1}}{p^k}\right)=\nu_p\left(\frac{a(1-bb^{-1})}{p^kb}\right)\geq 1.
\]
\end{proof}

\begin{Lemma}
If $a,b\in\Z$ and $a=b+O(p^{k+1})$, 
\[
\frac{a}{p^k}= \frac{b}{p^k}+O(p).
\]
\end{Lemma}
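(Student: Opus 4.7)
The statement is just a bookkeeping lemma about how big-$O$ tolerance scales when a common factor of $p^k$ is extracted. The plan is to unwind the big-$O$ notation using the definition laid out at the start of Section~3, namely $x - y = O(p^\ell) \Longleftrightarrow \nu_p(x-y) \geq \ell$, and then apply the identity $\nu_p(xp^{-k}) = \nu_p(x) - k$, which follows immediately from the multiplicativity of the $p$-adic valuation on $\Q^\times$.

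More precisely, first I would rewrite the hypothesis $a = b + O(p^{k+1})$ as $\nu_p(a-b) \geq k+1$. Then I would compute
\[
\nu_p\!\left(\frac{a}{p^k}-\frac{b}{p^k}\right) = \nu_p\!\left(\frac{a-b}{p^k}\right) = \nu_p(a-b) - k \geq (k+1)-k = 1,
\]
which is exactly the assertion $\frac{a}{p^k} = \frac{b}{p^k} + O(p)$. No case analysis is needed, since $a-b$ is a single rational number and the formula $\nu_p(x/y) = \nu_p(x) - \nu_p(y)$ holds without any coprimality hypothesis.

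There is essentially no obstacle here: the lemma is the ``divide out a common $p^k$ factor'' counterpart to Proposition~\ref{equiv_p-adica}, and its only role in the paper is to license the step of passing from a congruence modulo $p^{k+1}$ to a congruence modulo $p$ after dividing through by $p^k$. The only subtlety worth flagging in the write-up is that the statement remains valid even though $a/p^k$ and $b/p^k$ are no longer integers; this is transparent once one works with $\nu_p$ on $\Q$ rather than on $\Z$, and indeed the hypothesis $a,b\in\Z$ is stronger than necessary.
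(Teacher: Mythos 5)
Your proposal is correct and is essentially identical to the paper's own proof: both unwind the big-$O$ notation and compute $\nu_p\bigl(\frac{a-b}{p^k}\bigr)=\nu_p(a-b)-k\geq 1$. Nothing further is needed.
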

\begin{proof}
We have
\[
\nu_p\left(\frac{a}{p^k}-\frac{b}{p^k}\right)=\nu_p\left(\frac{a-b}{p^k}\right)=\nu_p(a-b)-k\geq 1.
\]
\end{proof}

\noindent We now recall the famous Clausen-Van Staudt Theorem:
\begin{Theorem}[Clausen-Van Staudt]
Let $n\in\N$ and  $B_i$ be the $i$-th Bernoulli number, then
\[
B_{2n}+\!\!\sum\limits_{\substack{{p\;\mathrm{ prime}}\\{(p-1)\mid 2n}}}\frac{1}{p}
\]
is an integer.
\end{Theorem}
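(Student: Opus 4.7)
The plan is to show, for every prime $p$, that $\nu_p(R_n)\geq 0$, where $R_n := B_{2n} + \sum_{(q-1)\mid 2n} 1/q$ (sum over primes $q$); since $R_n\in\Q$, this will force $R_n\in\Z$. Fixing a prime $p$, this reduces to proving that $\nu_p(B_{2n})\geq 0$ whenever $(p-1)\nmid 2n$, and that $pB_{2n}\equiv -1 \pmod p$ (in $\Z_{(p)}$) whenever $(p-1)\mid 2n$.

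I would proceed by strong induction on $n$, so that the same statement is already available for every $B_{2m}$ with $m<n$; in particular, $\nu_p(B_{2m})\geq -1$ for all such $m$. The main tool is the Faulhaber/Bernoulli formula
\[
S_{2n}(p)\;:=\;\sum_{j=1}^{p-1} j^{2n}\;=\;\frac{1}{2n+1}\sum_{i=0}^{2n}\binom{2n+1}{i}B_i\,p^{2n+1-i},
\]
combined with an elementary evaluation of $S_{2n}(p)$ modulo $p$: by Fermat's little theorem, $j^{2n}\equiv 1\pmod p$ for each $1\leq j\leq p-1$ when $(p-1)\mid 2n$, so $S_{2n}(p)\equiv -1\pmod p$; otherwise, summing the geometric progression $g^{2ni}$ in a primitive root $g$ modulo $p$ yields $S_{2n}(p)\equiv 0\pmod p$.

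On the Bernoulli side, the term corresponding to $i=2n$ is exactly $pB_{2n}$; the strategy is to show that every other term lies in $p\Z_{(p)}$, so that the whole sum reduces to $pB_{2n}$ modulo $p$ and equating the two evaluations of $S_{2n}(p)$ yields the desired congruences. The vanishing terms split naturally: odd indices $i\geq 3$ are killed by $B_i=0$; the indices $i=0,1$ carry a factor $p^{2n}$ or higher and are trivially in $p\Z_{(p)}$; and for even $i$ with $2\leq i\leq 2n-2$, one rewrites $\binom{2n+1}{i}/(2n+1) = \binom{2n}{i-1}/i$ and invokes the inductive bound $\nu_p(B_i)\geq -1$.

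The main obstacle is the last case: verifying that $\binom{2n}{i-1}B_i\,p^{2n+1-i}/i\in p\Z_{(p)}$ for even $2\leq i\leq 2n-2$. The delicate regime is when $(p-1)\mid i$ (so that $\nu_p(B_i)=-1$ is possible) and simultaneously $p\mid i$: since $\gcd(p,p-1)=1$, this combination forces $p(p-1)\mid i$, hence $i\geq p(p-1)$, which together with a Kummer-style estimate on $\nu_p\bigl(\binom{2n}{i-1}/i\bigr)$ is enough to absorb the loss of $-1$ coming from $B_i$ against the factor $p^{2n+1-i}$. Once this cancellation is verified, comparing the two values of $S_{2n}(p)\bmod p$ gives $pB_{2n}\equiv -1\pmod p$ exactly when $(p-1)\mid 2n$ and $pB_{2n}\equiv 0\pmod p$ otherwise, closing the induction and finishing the proof.
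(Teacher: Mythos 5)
The paper does not actually prove this theorem: it states it and defers to Carlitz \cite{Carlitz}, using only its Corollary \ref{CorCVS} later on. So any complete argument is necessarily ``a different route from the paper''. What you propose is the classical proof (localise at each prime, compare the Faulhaber expansion of $\sum_{j=1}^{p-1}j^{2n}$ against its elementary evaluation modulo $p$ via Fermat's little theorem and primitive roots, and use strong induction to discard the middle terms), and that strategy is sound: the reduction to the two local statements, the base case, and both evaluations of $S_{2n}(p)\bmod p$ are all correct.

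The one step that is not yet a proof is precisely the one you flag. For even $2\le i\le 2n-2$ you must show $\nu_p\bigl(\binom{2n}{i-1}B_i\,p^{2n+1-i}/i\bigr)\ge 1$, and the crude bounds $\nu_p(B_i)\ge -1$, $\nu_p\bigl(\binom{2n}{i-1}\bigr)\ge 0$ only give $2n-i-\nu_p(i)$, which can be negative whenever $\nu_p(i)$ is large and $i$ is close to $2n$; moreover this can already happen when $(p-1)\nmid i$, so your case split (``the delicate regime is $(p-1)\mid i$ and $p\mid i$, forcing $i\ge p(p-1)$'') does not isolate all the problematic terms, and the inequality $i\ge p(p-1)$ by itself gives no control of $\nu_p(i)$ in terms of $2n-i$. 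The clean repair --- presumably the ``Kummer-style estimate'' you allude to --- is the identity $\binom{2n}{i-1}/i=\binom{2n}{i}/(2n+1-i)$, both sides being $(2n)!/\bigl(i!\,(2n+1-i)!\bigr)$: writing $s=2n+1-i\ge 3$, one has $\nu_p(p^{s}/s)=s-\nu_p(s)\ge 2$ because $\nu_p(s)\le\lfloor\log_2 s\rfloor\le s-2$, so every middle term has valuation at least $-1+2=1$ without any case distinction. With that substitution your argument closes; as written, the key estimate is asserted rather than proved and the asserted reduction would not suffice.
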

See \cite{Carlitz} for a proof of the theorem. 

\begin{Corollary}\label{CorCVS}
Let $n\in\N$ and  $B_i$ be the $i$-th Bernoulli number, then
\begin{itemize}
    \item [i)] $\nu_p(B_{2n})\geq -1,$
    \item [ii)] $\nu_p(B_{2n})=-1\quad \iff \quad p-1\mid 2n.$
\end{itemize}
\end{Corollary}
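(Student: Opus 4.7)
The plan is to apply the Clausen--Van Staudt Theorem directly and then exploit the ultrametric behaviour of the $p$-adic valuation to isolate the contribution of the prime $p$. Write
\[
B_{2n}=N-\sum_{\substack{q\text{ prime}\\ (q-1)\mid 2n}}\frac{1}{q},
\]
where $N\in\Z$ by the theorem. Since $\nu_p(N)\geq 0$, the valuation $\nu_p(B_{2n})$ is governed entirely by the finite sum on the right.

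I would then split into two cases according to whether $p-1\mid 2n$. If $p-1\nmid 2n$, then every prime $q$ appearing in the sum satisfies $q\neq p$, hence $\nu_p(1/q)=0$ for each term; combining with $\nu_p(N)\geq 0$ and the non-archimedean triangle inequality yields $\nu_p(B_{2n})\geq 0$. If instead $p-1\mid 2n$, then the term $1/p$ occurs exactly once in the sum, contributes $\nu_p(1/p)=-1$, and all other summands together with $N$ have valuation $\geq 0$; by the strong triangle inequality with a unique minimum, $\nu_p(B_{2n})=-1$ exactly.

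Part (i) follows immediately from the two cases, since in both situations $\nu_p(B_{2n})\geq -1$. For part (ii), the $(\Leftarrow)$ implication is the second case above, and the $(\Rightarrow)$ implication is its contrapositive: if $p-1\nmid 2n$ then the first case gives $\nu_p(B_{2n})\geq 0>-1$, so $\nu_p(B_{2n})=-1$ forces $p-1\mid 2n$.

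There is no real obstacle here: the only subtlety is to make sure that when $p-1\mid 2n$ the $-1$ in $\nu_p(1/p)$ is strictly smaller than the valuations of the other terms, so that the minimum is attained uniquely and the ultrametric equality (rather than inequality) holds; this is automatic because every other prime $q\ne p$ in the sum contributes a rational with $\nu_p=0$ and $N$ is an integer.
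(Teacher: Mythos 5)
Your argument is correct and is precisely the intended derivation: the paper states Corollary \ref{CorCVS} as an immediate consequence of the Clausen--Van Staudt Theorem without writing out a proof, and your decomposition $B_{2n}=N-\sum_{(q-1)\mid 2n}1/q$ together with the ultrametric inequality (with unique minimum when $p-1\mid 2n$) is exactly the standard way to make that implication explicit. No gaps; the only unstated convention is that $n\geq 1$, which is also implicit in the paper.
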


The previous Propositions and Lemmas are used to investigate $H(p^mn)$ more in detail:
\begin{Theorem}
\label{Formula1}Let $n,m$ be non-negative integers and $p\geq 5$ be a prime number, then the following formula holds
\begin{equation}\label{m_power:first}
H(p^mn)
=
\frac{H(n)}{p^m}
+\sum\limits_{h=2}^{m-1}
\sum\limits_{k=1}^{h}
\frac{B_{p^h(p-1)-2k}}{2kp^{h}}\binom{p^{h}(p-1)-1}{2k-1}p^{2k(m-h)}n^{2k}+O(p),
\end{equation}
where $B_i$ is the $i$-th Bernoulli number.
\end{Theorem}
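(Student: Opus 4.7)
The plan is to iterate the natural splitting
\[
H(p^\ell n) = T_\ell + \frac{1}{p}H(p^{\ell-1}n), \qquad T_\ell := \!\!\sum_{\substack{1\leq b\leq p^\ell n\\ p\nmid b}}\!\!\frac{1}{b},
\]
obtained by grouping terms according to whether $p\mid b$, and telescope it from $\ell=m$ down to $\ell=0$ to get
\[
H(p^m n) = \frac{H(n)}{p^m} + \sum_{h=0}^{m-1}\frac{T_{m-h}}{p^h}.
\]
Since each $T_{m-h}$ is divided by $p^h$, it suffices to evaluate $T_{m-h}$ modulo $p^{h+1}$ in order to obtain the overall $O(p)$ error.

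For this I would invoke Euler's theorem in $(\Z/p^{h+1}\Z)^\times$, which has order $p^h(p-1)$: for $p\nmid b$ one has $b^{p^h(p-1)}\equiv 1\pmod{p^{h+1}}$, so with $r := p^h(p-1)-1$ we get $1/b \equiv b^r\pmod{p^{h+1}}$ in $\Z_p$. Applying this termwise and separating multiples of $p$ yields
\[
T_{m-h}\equiv \sum_{b=1}^{p^{m-h}n}b^r - p^r\sum_{c=1}^{p^{m-h-1}n}c^r\pmod{p^{h+1}},
\]
and the second sum vanishes since $p^r\geq p^{h+1}$ for $p\geq 5$. I would then apply Faulhaber's formula
\[
\sum_{b=1}^{N}b^r = \frac{1}{r+1}\sum_{i=0}^{r}\binom{r+1}{i}B_i\, N^{r+1-i}
\]
(in the convention $B_1=1/2$), reindex via $i=r+1-2k$, discard the odd-indexed terms using $B_{2j+1}=0$ for $j\geq 1$, and rewrite the coefficient via the identity
\[
\frac{1}{r+1}\binom{r+1}{2k}=\frac{1}{2k}\binom{p^h(p-1)-1}{2k-1}.
\]
Substituting $N=p^{m-h}n$ produces the factor $p^{2k(m-h)}n^{2k}$ and matches the summand in the statement exactly.

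What remains is the truncation. The two leading Faulhaber terms $N^{r+1}/(r+1)$ and $N^r/2$ have $p$-adic valuation at least $p^h(p-1)(m-h)-h\geq h+1$ and so vanish modulo $p^{h+1}$. For the $h=0$ and $h=1$ slices of the double sum (which the statement omits), a direct $\nu_p$ estimate on each summand, using Corollary \ref{CorCVS} to bound $\nu_p(B_{2j})\geq -1$ with equality only when $(p-1)\mid 2j$, together with the elementary $\nu_p(2k)\leq \log_p(2k)$, shows the contribution is $O(p)$; the same kind of estimate disposes of the $k>h$ tail when $h\geq 2$. The main obstacle will be precisely this $p$-adic bookkeeping---tracking when Clausen--Von Staudt costs a stray factor of $1/p$ and verifying that the exponent $2k(m-h)$ always more than compensates for both that and the $1/p^h$ from the telescoping, so that every discarded term is genuinely $O(p)$.
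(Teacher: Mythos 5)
Your proposal is correct and follows essentially the same route as the paper: both peel off the $p$-divisible terms, convert the remaining reciprocals into power sums via Euler's theorem modulo $p^{h+1}$ (exponent $\varphi(p^{h+1})-1=p^h(p-1)-1$), apply Faulhaber's formula, and truncate using Clausen--von Staudt. The only difference is organizational --- you telescope all $m$ slices at once, whereas the paper proceeds by induction on $m$, computing one slice $\chi_m(n)$ per step; your direct assembly makes the bookkeeping of the discarded terms (the $h=0,1$ slices and the $k>h$ tail) more explicit than the paper's inductive Step 2.
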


\begin{proof}

In order to prove the statement:

\noindent Let us define
\[
a_m(n)
:=
\sum\limits_{h=2}^{m-1}
\sum\limits_{k=1}^{h}
\frac{B_{p^h(p-1)-2k}}{2kp^{h}}
\binom{p^{h}(p-1)-1}{2k-1}
p^{2k(m-h)}n^{2k}. 
\]
Hence the statement becomes:
\[
H(p^mn)
=
\dfrac{H(n)}{p^m}+a_m(n)+O(p).
\]
The proof follows by induction on $m$: our aim is to find functions $\chi_m(n)$ such that the following congruences hold:
\[
H(p^{m+1}n)
=
\dfrac{H(pn)}{p^m}+a_m(n)+O(p)
=
 \dfrac{H(n)}{p^{m+1}}+\chi_m(n)+a_m(n)+O(p)
 =
 \dfrac{H(n)}{p^{m+1}}+a_{m+1}(n)+O(p).
\]
\begin{itemize}
    \item [\textit{Step 1.}] Find $\chi_m(n)$ such that
    \[
    \dfrac{H(pn)}{p^m}
    =
    \dfrac{H(n)}{p^{m+1}}+\chi_m(n)+O(p).
    \]
    \item [\textit{Step 2.}] Prove that, for the $\chi_m(n)$ already found, it holds
    \[
    \chi_m(pn)+a_m(n)
    =
    a_{m+1}(n)+O(p).
    \]
\end{itemize}

\noindent\textit{Proof of step 1.} By Proposition \ref{comp_somma} and noting that \[\displaystyle H(pn)=\sum\limits_{k=1}^n\frac{1}{pk}+\sum\limits_{\ell=0}^{n-1}\sum\limits_{k=1}^{p-1}\frac{1}{\ell p+k},\] we obtain
\[
\begin{aligned}
\dfrac{H(pn)}{p^m}
&=
\dfrac{1}{p^m}\left(\sum\limits_{k=1}^n\frac{1}{pk}+\sum\limits_{\ell=0}^{n-1}\sum\limits_{k=1}^{p-1}\frac{1}{\ell p+k}\right)+O(p)
\\
&=
\dfrac{1}{p^m}\left(\dfrac{H(n)}{p}+\sum\limits_{\ell=0}^{n-1}\sum\limits_{k=1}^{p-1}\frac{1}{\ell p+k}\right)+O(p)
\\
&=
\dfrac{H(n)}{p^{m+1}}+\dfrac{1}{p^m}\sum\limits_{\ell=0}^{n-1}\sum\limits_{k=1}^{p-1}\frac{1}{\ell p+k}+O(p).
\end{aligned}
\]
Hence, we found
\[
\chi_m(n)=\dfrac{1}{p^m}\sum\limits_{\ell=0}^{n-1}\sum\limits_{k=1}^{p-1}\frac{1}{\ell p+k}.
\]
We focus on the inner sum of the RHS:
\begin{align}\label{main:telescopic}
\frac{1}{p^m}\sum_{k=1}^{p-1}\frac{1}{\ell p+k}
&= 
\frac{1}{p^m}\sum_{k=1}^{p-1}(\ell p+k)^{p^m(p-1)-1}+O(p)
\notag
\\
&=
\frac{1}{p^m}
\left(\sum_{k=1}^{\ell p+p-1}k^{p^m(p-1)-1}
-
\sum_{k=1}^{\ell p}k^{p^m(p-1)-1}\right)+O(p)
\notag
\\
&=
\frac{1}{p^m}
\left(\sum_{k=1}^{(\ell+1)p}k^{p^m(p-1)-1}
-
\sum_{k=1}^{\ell p}k^{p^m(p-1)-1}\right)+O(p).
\end{align}
We have taken advantage of the fact that, if $y$ has an inverse modulo $n$, $y^{-1}\equiv y^{\varphi(n)-1}\text{(mod }n\text{)}$ by Euler's Theorem, where $\varphi$ denotes Euler's totient function. This explains the choice of the exponent $p^m(p-1)=\varphi(p^{m+1})$.

\noindent Now the recurring expression
\[
\frac{1}{p^m}\sum_{k=1}^{\ell p}k^{p^m(p-1)-1}
\]
can be simplified using Bernoulli's formula for sums of powers (Faulhaber's formula, see \cite[\S 4]{Conway} pag. 106):
\begin{align}\label{faulhaber}
\frac{1}{p^m}\sum\limits_{\substack{k=1}}^{\ell p}k^{p^m(p-1)-1}
=
\frac{1}{p^{2m}(p-1)}\!\!\!\!\!\!\!\!
\sum_{k=0}^{p^m(p-1)-1}
\!\!\!\!\!\!\!(-1)^k B_k\binom{p^m(p-1)}{k}(\ell p)^{p^m(p-1)-k}.
\end{align}
In terms of congruence modulo $p$, the terms with $0\leq k< p^m(p-1)-2m$ vanish since they have positive $p$-adic valuation: by Corollary \ref{CorCVS} we find that $\nu_p(B_k)\geq -1$. Moreover, the $p$-adic valuation of a binomial coefficient is non-negative and by removing $B_1$ we do not need the factor $(-1)^k$ because $B_k=0$ for all $k\ge3$ odd. Then, the RHS of \eqref{faulhaber} is equal to
\[
\frac{1}{p^{2m}(p-1)}
\sum_{k=1}^{2m}B_{p^m(p-1)-k}
\binom{p^m(p-1)}{k}(\ell p)^{k}.
\]
Resuming from \eqref{main:telescopic}, we arrive at
\begin{align*}
\frac{1}{p^m}\sum_{k=1}^{p-1}\frac{1}{\ell p+k}
&=
\frac{1}{p^m}
\left(\sum\limits_{\substack{k=1}}^{(\ell +1)p}k^{p^m(p-1)-1}
-
\sum\limits_{\substack{k=1}}^{\ell p}k^{p^m(p-1)-1}\right)+O(p)
\\
&=
\frac{1}{p^{2m}(p-1)}
\sum_{k=1}^{2m}B_{p^m(p-1)-k}
\binom{p^m(p-1)}{k}
\left(\left((\ell +1)p\right)^k-(\ell p)^k\right)+O(p).
\end{align*}

\noindent Now we have a candidate $\chi_m(n)$, and by rearranging terms, developing the telescopic sum and highlighting again that $B_k=0$ for all $k\geq 3$ odd, we arrive at the following

\begin{align*}
\chi_m(n)
&=
\frac{1}{p^{2m}(p-1)}
\sum_{\ell=0}^{n-1}
\sum_{k=1}^{2m}B_{p^m(p-1)-k}
\binom{p^m(p-1)}{k}
\left(\left((\ell +1)p\right)^k-(\ell p)^k\right)+O(p)
\\
&=
\frac{1}{p^{2m}(p-1)}
\sum_{k=1}^{2m}B_{p^m(p-1)-k}
\binom{p^m(p-1)}{k}(np)^k+O(p)
\\
&=
\frac{1}{p^{m}}
\sum_{k=1}^{2m}\frac{B_{p^m(p-1)-k}}{k}
\binom{p^m(p-1)-1}{k-1}(np)^k+O(p)
\\
&=
\sum_{k=1}^{m}
\frac{B_{p^m(p-1)-2k}}{2kp^m}
\binom{p^m(p-1)-1}{2k-1}(np)^{2k}+O(p).
\end{align*}

\noindent\textit{Proof of step 2.} In order to conclude the global proof we need to show that it holds
\[
a_{m+1}(n)
=
 \chi_m(pn)+a_m(n)+O(p),
\]
The identity is true for $m=3$ and, given the inductive hypothesis, the identity is trivial since $\chi_m(n)$ just found is exactly the $m$-th term of the sum of $a_m(n)$.
\end{proof}

\section{Proof of Theorem \ref{non-W-primes} and \ref{W-primes}}

\noindent We recall here this famous result from Kummer:
\begin{Theorem}[Kummer's Congruence]
\label{KummersConguence}
If $a,b\in\N$ are even, not divisible by $p-1$ and $a= b+O(p-1)$, then
\[
\frac{B_a}{a}
=
\frac{B_b}{b}+O(p).
\]
\end{Theorem}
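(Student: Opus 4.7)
The plan is to prove Kummer's congruence by the classical primitive-root-twist device, which clears the denominator of $B_a/a$ and produces a $p$-adically integral quantity whose reduction modulo $p$ depends only on $a$ modulo $p-1$. The key technical tool is the multiplication formula for Bernoulli polynomials, $B_a(cx)=c^{a-1}\sum_{j=0}^{c-1}B_a(x+j/c)$, which upon evaluation at $x=0$ and expansion via $B_a(j/c)=\sum_{i=0}^{a}\binom{a}{i}B_i(j/c)^{a-i}$, after isolating the $B_a$-contribution on the left, yields an identity of the form
\[
(1-c^a)\frac{B_a}{a}=\frac{1}{ac}\sum_{j=1}^{c-1}j^{a}+\sum_{i=1}^{a-1}\binom{a-1}{i-1}\frac{B_i}{i}\,c^{\,i-1}\sum_{j=1}^{c-1}j^{\,a-i},
\]
valid for every integer $c$ coprime to $p$.

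I would then proceed as follows. First, derive the above identity by rearranging the multiplication formula. Second, specialise $c=g$ a primitive root modulo $p$; the hypothesis $(p-1)\nmid a$ makes $1-g^a\not\equiv 1\pmod p$ a $p$-adic unit, and similarly for $b$. Third, apply Fermat's little theorem to each inner power sum: since $a\equiv b\pmod{p-1}$, one has $j^{a-i}\equiv j^{b-i}\pmod p$ for every $j\in\{1,\dots,g-1\}$ and every $i$. Fourth, run a strong induction on $a$: the factors $B_i/i$ with $i<a$ satisfy the Kummer congruence by the inductive hypothesis, while the base case consists of small even $a<p-1$, for which $B_a/a$ is already a $p$-adic integer and $b=a$ makes the assertion trivial. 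Combining these ingredients gives $(1-g^a)B_a/a\equiv(1-g^b)B_b/b\pmod p$, and dividing by the common unit $1-g^a\equiv 1-g^b\pmod p$ delivers $B_a/a\equiv B_b/b\pmod p$.

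The principal obstacle is the $p$-integrality bookkeeping in the identity: some $B_i/i$ appearing on the right-hand side may have denominator $p$ (namely when $(p-1)\mid i$), and one must verify term by term that these denominators are absorbed by the factor $c^{i-1}$ and the binomial coefficient $\binom{a-1}{i-1}$, using Corollary \ref{CorCVS} which pins down exactly when $\nu_p(B_i)=-1$. Once this is settled, the reduction modulo $p$ is transparent and the Fermat step is formal. A slicker alternative is the $p$-adic Volkenborn-integral route, which encodes the whole argument in the single statement that the Bernoulli distribution extends to a $p$-adic measure, but the direct combinatorial argument above sits more naturally alongside the Faulhaber-based machinery developed in the preceding sections.
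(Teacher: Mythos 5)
The paper does not actually prove this statement: Kummer's congruence is recalled as a classical theorem and used as a black box, so there is no in-paper proof to compare yours against; I can only assess your argument on its own terms.

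Your identity obtained from Raabe's multiplication formula is correct as an identity of rational numbers, but the step ``combining these ingredients gives $(1-g^a)B_a/a\equiv(1-g^b)B_b/b\pmod p$'' does not follow from the ingredients you list, and this is a genuine gap rather than a bookkeeping issue. The right-hand sides of your identity for $a$ and for $b$ cannot be compared term by term: the sums have different lengths ($i\le a-1$ versus $i\le b-1$), the binomial coefficients $\binom{a-1}{i-1}$ and $\binom{b-1}{i-1}$ are in general not congruent modulo $p$ even when $a\equiv b\pmod{p-1}$, and the leading term carries a factor $1/a$ versus $1/b$, which are likewise not congruent modulo $p$ (and need not even be $p$-integral when $p\mid a$, a case the theorem allows). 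Moreover the inductive hypothesis is never genuinely invoked: if you match the term $i$ of one sum with the term $i$ of the other, the factor $B_i/i$ is literally identical on both sides and the induction contributes nothing, while Fermat's little theorem only handles the inner power sums. A concrete check: for $p=5$, $a=2$, $b=6$, $g=2$, the leading terms are $1/4\equiv 4$ and $1/12\equiv 3\pmod 5$, and the $b$-identity contains an $i=4$ term congruent to $1\pmod 5$ with no counterpart in the $a$-identity; the two totals agree modulo $5$ only after nontrivial cancellation across terms. The missing idea is to replace your identity by one whose right-hand side depends on $a$ \emph{only} through exponents of units modulo $p$ --- this is exactly the Voronoi congruence $(c^a-1)\frac{B_a}{a}\equiv c^{a-1}\sum_{j=1}^{p-1}j^{a-1}\lfloor jc/p\rfloor \pmod p$, to which Fermat's little theorem applies directly and from which the theorem follows by your primitive-root division step; proving that congruence (or, alternatively, carrying out the $p$-adic measure/Volkenborn argument you mention as an aside) is where the actual content of Kummer's theorem lies.
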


\noindent Now we are able to prove Theorem \ref{non-W-primes}.

\subsection{Proof of Theorem \ref{non-W-primes}}

\begin{proof}[Proof of Theorem \ref{non-W-primes}]
\noindent
\begin{enumerate}
\item If we prove that $H(p^3n)= O(p)$ it follows from Lemma \ref{casobase_m=2} that $\frac{H(pn)}{p^2}= O(p)$ and this means that $\nu_p(H(pn))\ge2$. We consider then Theorem \ref{Formula1} for $m=3$:
\[
H(p^3n)
=
 \frac{H(n)}{p^3}-B_{p^2(p-1)-2}\frac{n^2}{2}+O(p)
 =
 \frac{H(n)}{p^3}-\frac{B_{p-3}}{3}n^2+O(p)
 =
  -\frac{B_{p-3}}{3}n^2+O(p),
\]
as by hypothesis, $\nu_p(H(n))\ge 4$. The RHS has valuation zero if $p$ is non-Wolstenholme: in fact, using Theorem \ref{KummersConguence} to justify the Bernoulli number congruence,
\[
-\frac{B_{p^2(p-1)-2}}{2}
=
\frac{B_{p^2(p-1)-2}}{p^2(p-1)-2}+O(p)
=
\frac{B_{p-3}}{p-3}+O(p)
=
 -\frac{B_{p-3}}{3}+O(p).
\]

\item By hypothesis, for all $m\in\z{N}$, $H(p^mn)=O(p)$, we then evaluate formula \eqref{m_power:first} in $p^jn$: $\forall j\in\N$, where $p\nmid n$,
\begin{align*}
H(p^{m+j}n)
&=
O(p)\quad\mbox{and}
\\
H(p^{m+j}n)
&= 
\frac{H(p^jn)}{p^m}
+\sum\limits_{h=2}^{m-1}
\sum\limits_{k=1}^{h}
\frac{B_{p^h(p-1)-2k}}{2kp^{h}}\binom{p^{h}(p-1)-1}{2k-1}p^{2k(m-h+j)}n^{2k}+O(p).
\end{align*}

\noindent We now aim to prove that the term with the lowest $p$-adic valuation in the sum is obtained for $h=m-1,k=1$. This term has valuation

\begin{align*}
v_{\min}=&\nu_p\left(\frac{B_{p^{m-1}(p-1)-2}}{2p^{m-1}}\binom{p^{m-1}(p-1)-1}{1}p^{2(j+1)}n^{2}\right)=\\
&=\nu_p\left(B_{p^{m-1}(p-1)-2}\right)+2j+3-m=\\
&=\nu_p\left(B_{p-3}\right)+2j+3-m=2j+3-m.
\end{align*}

\noindent Once again we used Theorem \ref{KummersConguence} and Proposition \ref{equiv_congr-val} to justify the Bernoulli number congruence
\[
-\frac{B_{p^{m-1}(p-1)-2}}{2}
=
\frac{B_{p^{m-1}(p-1)-2}}{p^{m-1}(p-1)-2}+O(p)
=
\frac{B_{p-3}}{p-3}+O(p)
=
-\frac{B_{p-3}}{3}+O(p),
\]
from which follows the equivalence of valuations if $p$ is non-Wolstenholme.
Now we discuss the $p$-adic valuation term by term in the sum. Each summand may be represented as a pair $(h,k)$ as coordinates in the two sums formula with $2\leq h\leq m-1$ and $1\leq k\leq h$. Therefore each term falls into one of these three groups:
\begin{itemize}
    \item [$i)$] The term corresponding to $(h,k)=(m-1,1)$, discussed above,
    \item [$ii)$] Terms of the forms $(h, k)$ such that $(p-1)\mid 2k$ (except the summand in the $(i)$ group),
    \item [$iii)$] The remaining terms.\\
\end{itemize}

\noindent For the general summand in \eqref{m_power:first} we have:

\begin{spreadlines}{1.5ex}
\begin{align}\label{inequality-non-W}
\nu_p
&\left(\frac{B_{p^h(p-1)-2k}}{2kp^{h}}
\binom{p^{h}(p-1)-1}{2k-1}p^{2k(m-h+j)}n^{2k}\right)=\notag
\\
&=\nu_p\left(B_{p^h(p-1)-2k}\right)+\nu_p\binom{p^{h}(p-1)-1}{2k-1}+2k(m-h+j)-h-\nu_p(k) \notag
\\
&\geq\nu_p\left(B_{p^h(p-1)-2k}\right)+2k(m-h+j)-h-\nu_p(k) \notag
\\
&\geq\nu_p\left(B_{p^h(p-1)-2k}\right)+2k(m-h+j)-h+(k-\nu_p(k)-1)-k+1 \notag
\\
&\geq\nu_p\left(B_{p^h(p-1)-2k}\right)+2k(m-h+j)-h-k+1 \notag
\\
&\geq\nu_p\left(B_{p^h(p-1)-2k}\right)+2k\Big(m-h+j-\frac{1}{2}\Big)-h+1, 
\end{align}
\end{spreadlines}
as $k>1+\nu_p(k)$ for $p\geq 5$, and $k\geq 2$. Now, we use Corollary \ref{CorCVS}: $\nu_p\left(B_{p^h(p-1)-2k}\right)\geq -1$ and $\nu_p\left(B_{p^h(p-1)-2k}\right)=-1$ if and only if $(p-1)\mid 2k$. 

\noindent If the terms belong to group ($ii$), $(p-1)\mid 2k$, then \eqref{inequality-non-W} is greater than
\begin{align*}
& 2k\Big(m-h+j-\frac{1}{2}\Big)-h \\
\geq &(p-1)(m-h+j-\frac{1}{2})-m+1\\
\geq &(p-1)(1+j-\frac{1}{2})-m+1>v_{\min}.
\end{align*}

\noindent Otherwise, if the terms belong to group ($iii$), the inequality $h<m-1$ is strict, bearing in mind that for group $(iii)$ we have $(h,k)\neq (m-1,1)$ and $k\ge2$. Then \eqref{inequality-non-W} is greater than
\begin{align*}
& 2k\Big(m-h+j-\frac{1}{2}\Big)-h+1 \\
> & 2k\Big(\frac12+j\Big)-m+2\\
> & 2(1+j)-m+1\\
= & 2j+3-m=v_{\min}.
\end{align*}

\noindent Therefore we find that the sum has valuation $v_{\min}$, which is negative for sufficiently large $m$, thus 
\[
\nu_p\left(H(p^jn)\right)-m=\nu_p\left(\frac{H(p^jn)}{p^m}\right)=v_{\min}=2j+3-m,
\]
that means
\[
\nu_p\left(H(p^jn)\right)=2j+3.
\]

\item If instead there exists $m\in\N$ such that $\nu_p(H(p^mn)) \leq 2$, then by the descent phenomenon there exists $M\!\in\N$ such that $\nu_p(H(p^Mn))=0$. Then again evaluating formula \eqref{m_power:first} in $H(p^M\,p^jn)$ with $m=M$ in $p^jn$, where $p\nmid n$, we have 
\begin{equation*}
H(p^M\,p^jn)
=
\frac{H(p^jn)}{p^M}
+\sum\limits_{h=2}^{M-1}
\sum\limits_{k=1}^{h}
\frac{B_{p^h(p-1)-2k}}{2kp^{h}}\binom{p^{h}(p-1)-1}{2k-1}p^{2k(M-h+j)}n^{2k}+O(p),
\end{equation*}
where the LHS has valuation $-j$, and by the same reasoning as in case $(2)$ the RHS is a sum of two terms of valuation respectively $\nu_p(H(p^jn))-M$ and $2j+3-M$.
Now surely both valuation cannot exceed $-j$, since then the LHS and RHS would then have different valuations. Thus, we have the following cases:
\begin{itemize}
\item[•] If $3j+3>M$, then $\nu_p(H(p^jn))=M-j$.
\item[•] If $\nu_p(H(p^jn))>M-j$, then $3j+3=M$.
\item[•] If $3j+3\leq M$ and $\nu_p(H(p^jn))\leq M-j$, then $\nu_p(H(p^jn))=2j+3$.
\end{itemize}
Therefore, by rearranging the results, we conclude the proof.
\end{enumerate}

\vspace{-.5cm}

\end{proof}

\subsection{Proof of Theorem \ref{W-primes}}
Using the same arguments we can prove the analogous theorem for Wolstenholme primes.
\begin{proof}[Proof of Theorem \ref{W-primes}]
\noindent
\begin{enumerate}
\item  Again, using the same reasoning as in the previous Theorem, we evaluate formula \eqref{m_power:first} in $m=3$:
\[
H(p^3n)
=
 \frac{H(n)}{p^3}-B_{p^2(p-1)-2}\frac{n^2}{2}+O(p)
 =
 \frac{H(n)}{p^3}+\frac{B_{p-3}}{3}n^2+O(p)
 =
 \frac{H(n)}{p^3}+O(p),
\]
where the RHS has valuation zero. We note that this fact proves the descent phenomenon in the case $m=3$ for Wolstenholme primes.

\item As before, we evaluate formula \eqref{m_power:first} in $p^jn$, where $p\nmid n$:
\begin{align*}
H(p^{m+j}n)
&=
O(p)\quad\mbox{and}
\\
H(p^{m+j}n)
&=
\frac{H(p^jn)}{p^m}
+\sum\limits_{h=2}^{m-1}
\sum\limits_{k=1}^{h}
\frac{B_{p^h(p-1)-2k}}{2kp^{h}}\binom{p^{h}(p-1)-1}{2k-1}p^{2k(m-h+j)}n^{2k}+O(p).
\end{align*}

\noindent Differently from the proof of Theorem \ref{non-W-primes} the minimum valuation term in the sum is not known, but the lowest valuation of each summand is $v_{\min}\geq 2j+4-m$ according to the same argument used in the previous proof. Thus, since the LHS has positive valuation, we have
\[
\nu_p\left(\dfrac{H(p^jn)}{p^m}\right)\geq 2j+4-m,
\]
otherwise, thanks to ultrametric inequality, we would have
\[
\nu_p\left(H(p^m\,p^jn)\right)=2j+4-m
\]
that contradicts the hypothesis for sufficiently large $m$.

\noindent If, instead, there exists $m\in\N$ such that $\nu_p(H(p^mn))\leq3$ then by descendent phenomenon there exists $M\in\N$ such that $\nu_p(H(p^Mn))=0$. Therefore, we have:
\[
\nu_p(H(p^{M+j}n))=-j.
\]
Then again, evaluating formula \eqref{m_power:first} for $m=M$ in $p^jn$, where $p\nmid n$, we have the following congruence:

\begin{align*}
H(p^M\,p^jn)
= &
\frac{H(p^jn)}{p^M}
+\sum\limits_{h=2}^{M-1}
\sum\limits_{k=1}^{h}
\frac{B_{p^h(p-1)-2k}}{2kp^{h}}\binom{p^{h}(p-1)-1}{2k-1}p^{2k(M-h+j)}n^{2k}+O(p)\\
=&\dfrac{\Delta}{p^M}+\Sigma,
\end{align*}
say.
\noindent In this formula the $p$-adic valuation of the three terms are:
\begin{itemize}
    \item [$\bullet$] $\nu_p(H(p^M\,p^jn))=-j$,
    \item [$\bullet$] $\nu_p\left(\dfrac{\Delta}{p^M}\right)=\nu_p(\Delta)-M$,
    \item [$\bullet$] $\nu_p(\Sigma)\geq 2j+4-M$.
\end{itemize}

\noindent In order to complete the proof, we will use the $p$-adic valuation inequality to find a bound for $\nu_p(\Delta)$. However, in contrast to the situation in Theorem \ref{non-W-primes} case (3), there is not the exact valuation of $\nu_p(\Sigma)$. Hence, the possibilities are:
\begin{align*}
    i)&\quad\nu_p(\Delta)-M=-j &\nu_p(\Sigma)>-j\\
    ii)&\quad\nu_p(\Delta)-M>-j &\nu_p(\Sigma)=-j\\
    iii)&\quad\nu_p(\Delta)-M = \nu_p(\Sigma) \leq -j
\end{align*}

\noindent We combine these results with the inequality for $\nu_p(\Sigma)$ in order to estimate $\nu_p(\Delta)$. It is important to notice that the condition $\nu_p(\Sigma)>-j$ does not provide information combined with the other inequality concerning $\nu_p(\Sigma)$.
\begin{align*}
    i.a)&\quad\nu_p(\Delta)=M-j<2j+4 &\text{if }&M<3j+4 \text{ and $(i)$ holds,}\\
    i.b)&\quad\nu_p(\Delta)=M-j\geq 2j+4 &\text{if }&M\geq 3j+4 \text{ and $(i)$ holds,}\\
    ii)&\quad\nu_p(\Delta)>M-j>2j+4 &\text{if }&M \geq 3j+4 \text{ and $(ii)$ holds,}\\
    iii)&\quad\nu_p(\Delta)=\nu_p(\Sigma)+M\geq 2j+4 &\text{if }&M \geq 3j+4 \text{ and $(iii)$ holds.}
\end{align*}
This yields the desired result.
\end{enumerate}

\vspace{-.5cm}

\end{proof}

\section{Conclusions and further developments}
We highlight the fact that the existences of pairs $(n,p)$ satisfying case $(2)$ in Theorems \ref{non-W-primes} or \ref{W-primes} would disprove the conjecture of Eswarathasan and Levine on the finiteness of $J_p$: to achieve this outcome in formula \eqref{m_power:first} the sum should somehow vanish in valuation. We also point out that alas the refutation of case $(2)$ unfortunately does not imply Eswarathasan and Levine's conjecture, or even Boyd's conjecture on the impossibility of $\nu_p(H(n))\geq 4$, which would require the cases $(1),(2)$ to be false, and $M=3$ in case $(3)$.

One possible way to use the results achived could be that one undertaken by Sanna \cite{sanna} to ``count'' the number of elements of $J_p$ with $p$-adic valuation greater than 1. One could try to count the number of elements with $p$-adic valuation greater or equal to a positive integer $k$ in order to refute case 2 of Theorem \ref{non-W-primes}.
The problem in this case is that Sanna's Lemma 2.2 on short intervals works when looking for elements that cancel the derivative in a field $\Z_p$ ($f'_{d}\equiv 0\ \mod p$). As this case concerns a ring $\Z_{p^k}$ the number of roots of a polynomial cannot be bounded, in general, by $p$. Bounding it by $(d-1)p^{k-1}$ leads to overestimating the roots and then to a choice of $z$ that is inadequate for our purpose.

According to the same arguments used in this paper it does not appear too difficult to extend Theorems \ref{non-W-primes}- \ref{W-primes} for the generalized harmonic sum
\[
H_\al(n):=\sum\limits_{k=1}^n \frac{1}{k^\al}
\]
\noindent in the case $(p-1)\nmid \al$, the only difference being that the descent phenomenon lowers the valuation by $\al$.

\section{Acknowledgments}
This research is the result of a collaborative initiative among students of the three-year degree programme in Mathematics who have actively participated in the paths of excellence promoted by the Department of Mathematics ``Guido Castelnuovo'' of Sapienza University of Rome.

We would also like to thank Domenico Marino, Daniele Saracino and Francesco Vertucci for their fruitful suggestions that improved our paper.

We finally would like to thank the anonymous referees for their helpful comments and suggestions.

\end{document}